\documentclass[12pt]{amsart}
\usepackage[all]{xy}
\usepackage{amssymb}
\usepackage{amsthm}
\usepackage{hyperref}
\hypersetup{colorlinks=true,linkcolor=blue,citecolor=magenta}
\usepackage{amsmath}
\usepackage{amscd,enumitem}
\usepackage{verbatim}
\usepackage{eurosym}
\usepackage{graphicx}
\usepackage{float}
\usepackage{color}
\usepackage{dcolumn}
\usepackage[mathscr]{eucal}
\usepackage[all]{xy}
\usepackage{hyperref}
\usepackage{bbm}
\usepackage[textheight=8.65in, textwidth=6.8in]{geometry}
\usepackage{multirow}
\usepackage{caption}
\newtheorem*{thm*}{Theorem}
\newtheorem*{conj*}{Conjecture}

\newtheorem{thm}{Theorem}[section]

\newtheorem{cor}[thm]{Corollary}

\newtheorem{prop}[thm]{Proposition}

\newcommand{\Z}{\mathbb{Z}}
\newcommand{\Q}{\mathbb{Q}}

\newcommand{\N}{\mathbb{N}}
\newcommand{\SL}{\operatorname{SL}}

\numberwithin{equation}{section}

\begin{document}
\title{prime values of Ramanujan's tau function}
\author{Boyuan Xiong}
\address{Department of Mathematics, Indiana University, Bloomington, IN 47405, USA}
\email{boyxiong@iu.edu}
\subjclass[2020]{11F11, 11F30, 11D59}
\keywords{modular forms, Ramanujan tau function, Thue--Mahler equations}

\begin{abstract}
We study the prime values of Ramanujan's tau function $\tau(n)$. Lehmer \cite{Lehmer2} found that $n=251^2=63001$ is the smallest $n$ such that $\tau(n)$ is prime: $$\tau(251^2)=-80561663527802406257321747.$$ We prove that in most arithmetic progressions (mod 23), the prime values $\tau$ belonging to the progression form a thin set. As a consequence, there exists a set of primes of Dirichlet density $\frac{9}{11}$ which are not values of $\tau$.
\end{abstract}
\maketitle

\section{Introduction}
In 1916, Ramanujan \cite{Ramanujan1916} defined the $\tau$-function  $\tau(n)$ as following: $$\Delta(q)=\sum_{n=1}^\infty\tau(n)q^n=q\prod_{n=1}^\infty(1-q^n)^{24}=q-24q^2+252q^3-1472q^4+4830q^5-\cdots,$$\\which is the normalized weight 12 cusp form of $\SL_2(\Z)$. Lehmer's celebrated conjecture \cite{Lehmer}, which states that $\tau(n)$ is never 0, still remains open. However, Lehmer also showed that if $\tau(n)=0$, then there exists a prime $p$ for which $\tau(p)=0$. Serre~\cite{serre1981} proved that the natural density of primes $p$ for which $\tau(p)=0$ is $0$. This provides strong evidence in support of the conjecture. It is worth emphasizing that our results are of a different nature from those 
connected to Lehmer's conjecture.  The classical problem, studied by Serre~\cite{serre1981} 
and others, concerns the density of primes $p$ for which $\tau(p)=0$.  
By contrast, the present paper investigates the density of primes $p$ which 
occur as values of the $\tau$-function, that is, the distribution of 
solutions to $\tau(n)=p$ with $p$ prime.  In this way, our results complement 
the extensive literature on Lehmer’s conjecture by considering the density of 
prime values in the image of~$\tau$ rather than the zero set of $\tau$ at primes.\\

There has been recent interest in the set of omitted values of the $\tau$-function, that is integers which do not lie in the image of $\tau$.  For an odd $\alpha$, Murty, Murty and Shorey \cite{MMS} proved that $\tau(n) = \alpha$ for at most finitely many $n$. More precisely, they proved that there is a effective computable constant $c$ such that if $\tau(n)$ is odd, then $$|\tau(n)| > (\log n)^c.$$\\
In recent years, a number of papers have focused on finding explicit values $\alpha$ such that $\tau(n)=\alpha$ has no solutions. For odd values $\alpha$, Balakrishnan, Craig, Ono and Tsai \cite{BCOT}  provide a strategy for proving that certain odd integers are not $\tau$-values: \\
\begin{prop}\label{1.1}
    Suppose that $l$ is an odd prime such that $l \nmid \tau(l)$. If $\tau(n) = \pm l^m$, with $m \in \Z^+$, then $n = p^{d-1}$, where $p$ and $d \text{ }|\text{ } l(l^2-1)$ are odd primes. Furthermore, $\tau(n) = \pm l^m$ for at most finitely many $n$.
\end{prop}

Using this proposition, Balakrishnan, Craig, Ono and Tsai \cite{BCOT} proved that $\tau(n)$ can never belong to $$\{\pm1,\pm3,\pm5,\pm7,\pm13,\pm17,-19,\pm23,\pm37,\pm691\}$$
for $n\geq2$. After that, many authors have used Proposition \ref{1.1} to find additional omitted values of $\tau$. Following work of Amir and Hatziiliou \cite{AH1}, Amir and Hong \cite{AH2}, Dembner and Jain \cite{DJ}, Hanada and Madhukara \cite{HM} proved that $\tau(n) \neq \alpha$ for odd $\alpha$ with $|\alpha| < 100$. Furthermore, Bennett, Gherga, Patel and Siksek \cite{BGPS} proved that $\tau(n) \neq \pm l^m$ for all odd primes $3 \leq l < 100$ and all positive integer $m$.\\

Regarding even values, Balakrishnan, Ono and Tsai \cite{BOT} proved that$$\tau(n)\notin\{2l^j\colon (l,j)\in N^+\}\cup\{-2l^j\colon (l,j)\in N^-\}$$where $N^\pm=\{(l,j)\colon 1\leq j\not\equiv r\textup{ (mod }t)\textup{ for all }(l,r,t)\in S^{\pm}\}$ and \begin{equation}
S^{+}:= \left\{ \begin{matrix} (3, 0, 44), (5,0,22), (7,0,44), (7,19,44), (11,0,22), (13, 0, 44), (17, 0, 44), \\ (19, 0, 22),  (23,0,4), (29,0,22), 
(31,0,22), (37,0,44), (37,35,44), (41,0,22),\\ (43,0,44), (43,37,44), (47,0,4), (53,0,44), (59, 0, 22), (61,0,22), (67, 0, 44), \\ (67,43,44),
(71,0,22), (73,0,44), (79,0,22), (83,0,44), (89,0,22), (97,0,44)
\end{matrix}
\right \}
\end{equation}
\begin{equation}
S^{-}:=\{(3,15, 44), (5, 11, 22), (17, 33, 44), (59, 3, 22), (83,11, 44), (89, 11, 22)\}.
\end{equation}\\
The goal of all previous work on this problem has been to identify specific integers which are not $\tau$-values. The growth rate of the sequence of integers for which this has been done is exponential. We take a different slant, trying to show that there are many omitted values, even if we cannot identify them. Let $T=\{x\in\Z\colon x\neq\tau(n)\textup{ for all }n\in\N\}$ be the set of integers that are not values of $\tau(n)$. Also, for any set $X\subseteq\Z$, let $\pi_X$ be the usual counting function of $X$, that is,  $$\pi_X(N)= \# ( X\cap[-N,N]).$$
In this paper, without finding any explicit value that can be omitted by $\tau$, we can improve the result from $\pi_T(N) \gg \log N$ to $\pi_T(N)\gg\frac{N}{\log N}$ by proving the following theorem:
\begin{thm}\label{T}
    If $$b\in\{2,4,6,7,8,9,10,11,12,13,14,15,16,17,18,19,20,21\},$$ and $P_b=\{p \textup{ prime}\colon p \equiv b \textup{ (mod 23)}\}$. Then $\pi_{P_b\cap T}(N) \gg \frac{N}{\log N}$. 
\end{thm}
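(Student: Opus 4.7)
The plan is to bound the number of primes $p \leq N$ with $p \equiv b \pmod{23}$ lying in the image of $\tau$ by $o(N/\log N)$; by the prime number theorem in arithmetic progressions $\pi_{P_b}(N) \sim \frac{N}{22 \log N}$, this will give $\pi_{P_b \cap T}(N) = \Omega(N/\log N)$. First I reduce to $n$ a prime power: if $\tau(n) = p$ is prime and $n = n_1 n_2$ with $\gcd(n_1, n_2) = 1$, then multiplicativity of $\tau$ forces $\tau(n_1) = \pm 1$, and by the Balakrishnan--Craig--Ono--Tsai theorem quoted in the introduction this only happens when $n_1 = 1$. Hence $n = p_0^k$ for some prime $p_0$ and $k \geq 1$.

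Next I exploit the classical mod-$23$ congruences of Wilton and Swinnerton--Dyer: for $p_0 \neq 23$, $\tau(p_0) \equiv 0 \pmod{23}$ when $p_0$ is inert in $\Q(\sqrt{-23})$, $\tau(p_0) \equiv 2 \pmod{23}$ when $p_0 = x^2 + 23 y^2$ (principal split), and $\tau(p_0) \equiv -1 \pmod{23}$ when $p_0$ splits with non-principal prime factor. Running the Hecke recursion $\tau(p_0^{k+1}) \equiv \tau(p_0)\tau(p_0^k) - p_0^{11}\tau(p_0^{k-1}) \pmod{23}$ in each case, I find that $\tau(p_0^k) \pmod{23}$ lies in $\{0, 1\}$ in the inert case, equals $k + 1$ in the principal-split case (using $p_0^{11} \equiv 1 \pmod{23}$), and cycles through $\{-1, 0, 1\}$ with period $3$ in the non-principal split case. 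Since the listed $b$'s all avoid $\{0, 1, 22\}$, only the principal-split case can occur, and it forces $p_0 = x^2 + 23 y^2$ and $k \equiv b - 1 \pmod{23}$.

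To count the remaining prime values of $\tau(p_0^k)$, let $\alpha, \beta$ be the Frobenius eigenvalues at $p_0$, with $\alpha + \beta = \tau(p_0)$ and $\alpha \beta = p_0^{11}$. Then $\tau(p_0^k) = \prod_{d \mid k+1,\, d > 1} \Phi_d^{*}(\alpha, \beta)$, and primality of $\tau(p_0^k)$ forces all but one cyclotomic factor to equal $\pm 1$. When $k+1$ is composite, each equation $\Phi_d^{*}(\alpha, \beta) = \pm 1$ translates to a polynomial equation in $(\tau(p_0), p_0)$ that cuts out integral points on a curve of positive genus (for $d \geq 3$), so Faltings-type finiteness contributes only $O(1)$ solutions per case. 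When $k+1$ is prime, I invoke Deligne's bound $|\tau(p_0^k)| \leq (k+1)\, p_0^{11k/2}$ together with Sato--Tate equidistribution for $\tau$ (a theorem of Barnet-Lamb, Geraghty, Harris and Taylor) to bound $\#\{p_0 : |\tau(p_0^k)| \leq N\}$ by $O(N^\alpha/\log N)$ for some explicit $\alpha < 1$. Summing over $k \equiv b - 1 \pmod{23}$ and over the Case-B primes (of Dirichlet density $\tfrac{1}{6}$, since $\Q(\sqrt{-23})$ has class number $3$) gives a total prime-value count of $o(N/\log N)$, completing the argument.

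The main obstacle is the ``irreducible'' regime in which $k+1$ is itself prime, especially $b = 2$ (so $k = 1$ and $\tau(p_0)$ itself must be prime): here there is no cyclotomic factorization to exploit, and the count rests entirely on a Sato--Tate-style bound on how often $|\tau(p_0)|$ takes values of size at most $N$. I expect the restriction to the listed $18$ residues to reflect exactly where the congruence-plus-counting argument closes: the excluded residues $\{1, 3, 5, 22\}$ correspond either to multiple mod-$23$ cases contributing additional $(p_0, k)$ pairs ($b = 1, 22$), or to the smallest ``irreducible'' exponents $k + 1 \in \{3, 5\}$ where the Sato--Tate savings alone appear to be insufficient to keep the contribution below $N/\log N$.
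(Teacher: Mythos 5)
There is a genuine gap, and it sits exactly where you locate it yourself: the count of primes $p_0$ for which $|\tau(p_0^k)|\leq N$. Before that, though, you have missed a cheap but essential ingredient: the parity congruence ($\tau(n)$ is odd if and only if $n$ is an odd square, Proposition \ref{mod2} in the paper). An odd prime value therefore forces $n=p_0^{2k}$, and then $\tau(p_0^{2k})\equiv 2k+1 \pmod{23}$ in the principal-split case, so $2k\equiv b-1\pmod{23}$. For every listed $b$ this forces $k\geq 3$ (the excluded residues $3$ and $5$ are precisely those allowing $k=1,2$), and your ``main obstacle'' $b=2$, $k=1$ simply does not occur: $\tau(p_0)$ is even, and $2k\equiv 1\pmod{23}$ gives $k\geq 12$. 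So the hard case you flag is vacuous, but the cases $k\geq 3$ still all need to be counted, and that is where your argument does not close.

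For the counting itself, your two tools do not do the job. Sato--Tate equidistribution cannot give the bound you want: writing $\tau(p_0^{2k})=p_0^{11k}\,U_{2k}(\cos\theta_{p_0})$, the condition $|\tau(p_0^{2k})|\leq N$ with $p_0^{11}>N^{3}$ confines $\theta_{p_0}$ to a union of intervals of total measure roughly $k N p_0^{-11k}$ around the zeros of the Chebyshev polynomial, and no effective form of Sato--Tate has an error term anywhere near small enough to detect sets of that size. Your Faltings/Siegel count for composite $k+1$ is ineffective and not uniform in $d$, and you never truncate the sum over $k$ (for that the paper uses Schinzel's primitive-divisor theorem to get $|\tau(p^k)|>2^k$ for large $k$). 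The paper's actual mechanism is Diophantine approximation: from $\tau(p^{2k})=\prod_{j=1}^{k}\bigl(\tau(p)^2-4\cos^2(\tfrac{\pi j}{2k+1})\,p^{11}\bigr)$, the bound $|\tau(p^{2k})|\leq N$ with $p^{11}>N^3$ forces the rational number $\tau(p)^2/p^{11}$ to approximate one of the algebraic numbers $4\cos^2(\tfrac{\pi j}{2k+1})$ to within $h^{-5/2}$ of its height, and the quantitative Roth-type theorem of Bombieri and van der Poorten bounds the number of such approximations by an explicit function of $k$, uniformly, yielding at most $N^{9/10}$ prime values per $k$ and $O(N^{9/10}\log N)=o(N/\log N)$ in total. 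Without a substitute for that step your proposal establishes the reduction to $X_{2k}$, $k\geq 3$, but not the theorem.
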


If $X$ is a set of primes (including negative primes), then we define the Dirichlet density of $X$ to be $$\lim_{s\to 1^+}\frac{\sum_{p\in X}\frac{1}{|p|^s}}{2\sum_{p>0\textup{ prime}}\frac{1}{p^s}}=\lim_{s\to 1^+}\frac{\sum_{p\in X}\frac{1}{|p|^s}}{2\log(\frac{1}{s-1})}.$$ 

Theorem~\ref{T} shows that the set of prime numbers which occur as $\tau$-values is extremely sparse among all primes. In terms of Dirichlet density, we may express this observation as follows:

\begin{cor}\label{cordiri}
    The set $P_b \cap T$ has Dirichlet density zero.
\end{cor}
Conversely, by Theorem \ref{T} and Prime Number Theorem for Arithmetic Progressions, we have the following:
\begin{cor}
    The set of primes which do not occur as $\tau$-values has Dirichlet density at least $\frac{9}{11}$.
\end{cor}

\section{Main idea}
Let $S$ be the set of values of $\tau(n)$, i.e. $$S=T^c=\{\tau(n)\colon n\in\mathbb{N}\}.$$

Let $P$ be the set of odd primes (including negative primes), i.e.$$P=\{\pm3,\pm5,\pm7,\pm11,\cdots\}.$$

Since $\Delta$ is a cusp form of weight 12, we have the following well-known proposition, where  Mordell \cite{Mordell} proved the Hecke multiplicativity of $\tau(n)$ and Deligne \cite{Deligne1} \cite{Deligne2} gave the bound for $|\tau(p)|$: 
\begin{thm}\label{1}
    The following are true:\\
    (1) If $\textup{gcd}(m, n)=1$, then $\tau(mn)=\tau(m)\tau(n)$.\\
    (2) If $p$ is prime and $m \geq 2$, then $$\tau(p^m)=\tau(p)\tau(p^{m-1})-p^{11}\tau(p^{m-2}).$$\\
    (3) If $p$ is prime and $\alpha_p$, $\beta_p$ are the roots of $x^2-\tau(p)x+p^{11}$, then $$\tau(p^k)=\frac{\alpha_p^{k+1}-\beta_p^{k+1}}{\alpha_p-\beta_p}.$$
Moreover, $|\tau(p)|\leq 2p^{\frac{11}{2}}$ and hence $\alpha_p$ and $\beta_p$ are complex conjugates.
\end{thm}
By Theorem \ref{1} (1) together with the fact that $\tau(n)\neq\pm1$ when $n\geq 2$, we know that if $\tau(n)=l$ is prime, then $n$ must be a prime power. Hence, we want to consider the following sets $$X_k=\{\tau(p^k)\mid p\text{ prime}\}.$$

Another well-known result which shows that $\Delta$ has a trivial residual mod 2 Galois representation is the following: \begin{prop}\label{mod2}
    $\tau(n)$ is odd if and only if $n$ is an odd square.
\end{prop}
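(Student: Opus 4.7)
The plan is to work in $(\Z/2\Z)[[q]]$ and reduce $\Delta(q)$ to an explicit theta-like series whose support is exactly the odd squares. The main input is a repeated application of the Frobenius-style congruence $(1-q^n)^2\equiv 1-q^{2n}\pmod 2$, together with Jacobi's classical identity
\[
\prod_{n=1}^\infty(1-q^n)^3=\sum_{m=0}^\infty(-1)^m(2m+1)q^{m(m+1)/2}.
\]

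First, I would iterate the congruence above three times to get
\[
\prod_{n=1}^\infty(1-q^n)^{24}\equiv\prod_{n=1}^\infty(1-q^{8n})^3\pmod 2,
\]
since $24=8\cdot 3$ and each squaring doubles the exponent $n$ inside the factor while halving the outer exponent (from $24$ to $12$ to $6$ to $3$).

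Next, I would substitute $q\mapsto q^8$ into Jacobi's identity to obtain
\[
\prod_{n=1}^\infty(1-q^{8n})^3=\sum_{m=0}^\infty(-1)^m(2m+1)q^{4m(m+1)}.
\]
Multiplying by $q$ and noting that $1+4m(m+1)=(2m+1)^2$, this gives
\[
\Delta(q)\equiv\sum_{m=0}^\infty(-1)^m(2m+1)q^{(2m+1)^2}\pmod 2.
\]
Reading off coefficients, $\tau(n)\pmod 2$ vanishes unless $n=(2m+1)^2$ for some $m\geq 0$, in which case $\tau(n)\equiv 2m+1\equiv 1\pmod 2$. This is exactly the statement of Proposition \ref{mod2}.

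There is no real obstacle here: the only careful step is verifying that the exponent reductions $(1-q^n)^{2^k\cdot 3}\equiv(1-q^{2^k n})^3\pmod 2$ go through for $k=3$, which is immediate by induction from the binomial expansion $(1-q^n)^2=1-2q^n+q^{2n}$. The rest is a direct application of Jacobi's identity and a coefficient comparison.
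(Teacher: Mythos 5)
Your proof is correct and takes essentially the same route as the paper's: reduce $\Delta(q)$ modulo $2$ to $q\prod_{n\geq1}(1-q^{8n})^3$ via the Frobenius congruence, apply Jacobi's identity $\prod_{n\geq1}(1-q^n)^3=\sum_{m\geq0}(-1)^m(2m+1)q^{m(m+1)/2}$, and read off that the support is exactly the odd squares $(2m+1)^2$. The only difference is that the paper re-derives Jacobi's identity from the triple product identity (substituting $y=-xt$ and letting $t\to1$), whereas you cite it as classical.
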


It follows that 
$$S\cap P=\bigg(\bigcup_{k=1}^\infty X_k\bigg)\cap P=\bigg(\bigcup_{k=1}^\infty X_{2k}\bigg)\cap P.$$

That is, odd primes can only appear in $X_{2k}$ \footnote{Indeed, if $\tau(n)$ is an odd prime, then by Theorem \ref{1} (3) we can easily show that $n=p^{q-1}$ where both $p$ and $q$ are odd primes. This result was also proved by Lygeros and Rozier \cite{LR}. Furthermore, Proposition \ref{1.1} \cite{BCOT} provides a stronger result that $q$ also needs to be divisible by $l(l^2-1)$.}. For any odd prime $l\equiv b$ (mod 23) where $b \in \{2,4,6,7,8,9,10,11,12,13,14,15,16,17,18,19,20,21\}$, we will discuss the possibility that $l$ appears in $X_{2k}$. We divide these $X_{2k}$ into three parts. For $k$ small ($k=1,2$), we use mod 23 congruence relations of $\tau(n)$ to show that $l$ is not an element of $X_2$ and $X_4$; for $k$ large, we use Schinzel's result  \cite{Schinzel} to find a lower bound for $X_{2k}$ , which makes it impossible for $l$ to appear in $X_{2k}$; and for $k$ in the middle, we use Bombieri and van der Poorten's result \cite{Bombieri} to approximate $\#(P\cap X_{2k})$, and finally show that the density of primes in the values of $\tau(n)$ is really low.\\
\section{When $k$ is small}
Ramanujan \cite{Ramanujan1916} \cite{Ramanujan} \cite{BerndtOno1999} proved the following famous mod 23 congruence property of $\tau(n)$:
\begin{prop}\label{23}
If $p\neq 23$ is a positive prime, then \begin{equation}
    \tau(p)\equiv
    \begin{cases}
      0 \textup{ (mod 23)}  & \textup{if } (\frac{p}{23})=-1 \\
      2 \textup{ (mod 23)}& \textup{if } p=a^2+23b^2 \textup{ with }a,b\in\mathbb{Z}\\
      -1 \textup{ (mod 23)}& \textup{otherwise} 
    \end{cases}
\end{equation}
\end{prop}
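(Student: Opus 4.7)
The plan is to reduce the $q$-expansion of $\Delta$ modulo $23$ and identify the reduction with a known weight-one cusp form whose $p$-th Fourier coefficient has an explicit description in terms of how $p$ splits in $\Q(\sqrt{-23})$. This transfers the mod $23$ information about $\tau(p)$ to the ideal-theoretic conditions appearing in the statement.

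First, applying the Frobenius-type congruence $(1-q^n)^{23}\equiv 1-q^{23n}\pmod{23}$ to the factorization $24=23+1$ gives
\[
\Delta(q) = q\prod_{n\geq 1}(1-q^n)^{24} \equiv q\prod_{n\geq 1}(1-q^n)(1-q^{23n}) = \eta(\tau)\eta(23\tau) \pmod{23},
\]
where $\eta$ is the Dedekind eta function. The form $f(\tau) := \eta(\tau)\eta(23\tau)$ is the (unique, up to scalar) cuspidal newform of weight $1$ on $\Gamma_0(23)$ with Nebentypus $\left(\frac{\cdot}{23}\right)$, hence a normalized Hecke eigenform. Since $\Delta$ is also a normalized Hecke eigenform, the mod-$23$ equality of their $q$-expansions, combined with the fact that for a normalized eigenform the $p$-th Hecke eigenvalue equals the coefficient of $q^p$, gives $\tau(p) \equiv a_p(f) \pmod{23}$ for every prime $p \neq 23$.

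Next, I would compute $a_p(f)$ by exploiting the CM (dihedral) structure of $f$. Because the class group of $K := \Q(\sqrt{-23})$ is cyclic of order $3$, there is a nontrivial class-group character $\chi$ of order $3$, and the associated theta series gives $a_p(f) = \chi(\mathfrak{p}) + \chi(\bar{\mathfrak{p}})$ for primes $p$ split in $K$, and $a_p(f) = 0$ for $p$ inert. Evaluating the three possibilities yields $a_p(f) = 2$ when $\mathfrak{p}$ is principal, $a_p(f) = \zeta_3 + \zeta_3^{-1} = -1$ when $\mathfrak{p}$ is non-principal, and $0$ when $(p/23)=-1$. It then remains to note that for an odd prime $p$, principality of $\mathfrak{p}$ is equivalent to $p$ being represented by the principal form $x^2+xy+6y^2$ of discriminant $-23$; an elementary parity argument ($x(x+1)$ is always even, so $x^2+xy+6y^2$ is even whenever $y$ is odd) shows this reduces, for odd primes, to $p = a^2+23b^2$ via the substitution $y=2b,\ a=x+b$.

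The main obstacle is the identification of the eta-product $\eta(\tau)\eta(23\tau)$ with the theta series coming from a class-group character of $\Q(\sqrt{-23})$; this is a classical but nontrivial fact from the theory of weight-one CM forms (one can alternatively verify it by exhibiting the decomposition $\eta(\tau)\eta(23\tau) = \tfrac{1}{2}(\Theta_{x^2+xy+6y^2} - \Theta_{2x^2+xy+3y^2})$ of $f$ as a difference of theta series of the principal and non-principal binary quadratic forms of discriminant $-23$), and it is the arithmetic heart of Ramanujan's original mod-$23$ congruence.
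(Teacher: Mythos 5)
Your proposal is correct, and it is genuinely a proof, whereas the paper offers none: Proposition \ref{23} is stated with only a citation to the literature on Ramanujan's unpublished manuscript, so there is no internal argument to compare against. Your route is the standard modern one: the congruence $(1-q^n)^{24}\equiv(1-q^n)(1-q^{23n})\pmod{23}$ reduces $\Delta$ to $\eta(\tau)\eta(23\tau)$, and the congruence of $q$-expansions already gives $\tau(p)\equiv a_p(f)\pmod{23}$ coefficient by coefficient (your appeal to Hecke eigenvalues is harmless but unnecessary here). The dihedral computation is right: for split $p$ one has $a_p(f)=\chi(\mathfrak{p})+\chi(\mathfrak{p})^{-1}\in\{2,-1\}$ according to whether $\mathfrak{p}$ is principal, and $a_p(f)=0$ for inert $p$; the passage from the principal form $x^2+xy+6y^2$ to $a^2+23b^2$ via the parity of $y$ is also correct. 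Two small points to tidy up: the parity reduction as you state it covers only odd primes, so $p=2$ should be checked directly (it is represented by $2x^2+xy+3y^2$ and not by the principal form, consistent with $\tau(2)=-24\equiv-1$); and the identity $\eta(\tau)\eta(23\tau)=\tfrac12\bigl(\Theta_{x^2+xy+6y^2}-\Theta_{2x^2+xy+3y^2}\bigr)$, which you rightly isolate as the crux, deserves an explicit justification or citation --- the cleanest being that both sides are normalized elements of the one-dimensional space $S_1(\Gamma_0(23),(\tfrac{\cdot}{23}))$, once one knows the theta series attached to the cubic class-group character of $\Q(\sqrt{-23})$ lands in that space. With that reference supplied, your argument is complete and adds self-contained content the paper delegates entirely to its bibliography.
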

By Theorem \ref{1} (2) and Proposition \ref{23}, we have 
\begin{cor}\label{C}
    If $p\neq 23$ is a prime, $k\in\mathbb{N}$, then 
    \begin{equation}
        \tau(p^k)\equiv
        \begin{cases}
            0 \textup{ or }1\textup{ (mod 23)}& \textup{if }(\frac{p}{23})=-1 \\
            k+1 \textup{ (mod 23)}& \textup{if } p=a^2+23b^2 \textup{ with }a, b\in\mathbb{Z}\\
            0\textup{ or} \pm 1 \textup{ (mod 23)}& \textup{otherwise }
        \end{cases}
    \end{equation}
\end{cor}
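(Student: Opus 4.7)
The plan is to reduce the Hecke recurrence from Proposition \ref{1}(2) modulo $23$. For any prime $p \ne 23$, Euler's criterion gives $p^{11} \equiv \left(\frac{p}{23}\right) \pmod{23}$, so
\[
\tau(p^k) \;\equiv\; \tau(p)\,\tau(p^{k-1}) \;-\; \left(\tfrac{p}{23}\right)\tau(p^{k-2}) \pmod{23},
\]
with $\tau(p^0)=1$ and $\tau(p^1)=\tau(p)$. In each of the three cases of Proposition \ref{23}, both coefficients $\tau(p)$ and $\left(\frac{p}{23}\right)$ reduce to small explicit residues in $\{-1,0,1,2\}$, so the orbit of $(\tau(p^{k-1}),\tau(p^k))$ modulo $23$ can be read off directly.

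In case 1, $\left(\frac{p}{23}\right)=-1$ and $\tau(p)\equiv 0$, so the recurrence collapses to $\tau(p^k) \equiv \tau(p^{k-2})$, and the sequence of residues is $(1,0,1,0,\dots)$, giving $\tau(p^k)\in\{0,1\} \pmod{23}$. In case 2, the representation $p=a^2+23b^2$ forces $p\equiv a^2 \pmod{23}$, hence $\left(\frac{p}{23}\right)=1$, while $\tau(p)\equiv 2$. The recurrence becomes $\tau(p^k)\equiv 2\tau(p^{k-1})-\tau(p^{k-2})$, whose characteristic polynomial is $(x-1)^2$, and a short induction then yields $\tau(p^k)\equiv k+1 \pmod{23}$. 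In case 3, $\left(\frac{p}{23}\right)=1$ and $\tau(p)\equiv -1$, so $\tau(p^k)\equiv -\tau(p^{k-1})-\tau(p^{k-2})$; starting from $(1,-1)$ the sequence cycles with period $3$ through $(1,-1,0,1,-1,0,\dots)$, hence $\tau(p^k)\in\{0,\pm 1\} \pmod{23}$.

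I do not expect a real obstacle: the argument is essentially a direct verification once Euler's criterion and the Hecke recurrence are combined with Ramanujan's congruence. The only conceptual point worth flagging is the implication in case 2 that representability by the principal form $a^2+23b^2$ forces $p$ to be a quadratic residue modulo $23$ (so that $p^{11}\equiv +1$ rather than $-1$), which is what makes the characteristic polynomial a perfect square and produces the linear-in-$k$ answer $k+1$.
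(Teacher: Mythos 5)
Your proof is correct and is exactly the argument the paper intends: the paper derives Corollary \ref{C} by silently combining the Hecke recurrence of Proposition \ref{1}(2) with Ramanujan's congruence and Euler's criterion $p^{11}\equiv\leg{p}{23}\pmod{23}$, which is precisely what you carry out, including the correct observation that $p=a^2+23b^2$ forces $\leg{p}{23}=1$. All three residue orbits you compute ($1,0,1,0,\dots$; $k+1$; and the period-$3$ cycle $1,-1,0$) check out.
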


Due to this famous $\tau-$congruences, for $k=1,2$, $\tau(p^k)=l$ can only lie in the residue classes $1,3,5,22$ (mod 23).
Therefore if $l\equiv b$ (mod 23) where $$b\in\{2,4,6,7,8,9,10,11,12,13,14,15,16,17,18,19,20,21\},$$then $l$ can only appear in $X_{2k}$ where $k\geq 3$.

\section{When $k$ is large}

Suppose $\alpha$ and $\beta$ are the roots of a quadratic $x^2+Ax+B$ where $A,B$ are integers and the discriminant $A^2-4B<0$. If moreover $B\neq 1$, then gcd($\alpha,\beta$)=1 in the quadratic number field $\mathbb{Q}(\alpha)$ and $\alpha/\beta$ is not a root of unity. Hence $\alpha$ and $\beta$ satisfy all the conditions in Theorem 1 of Schinzel's paper \cite{Schinzel}. Then plugging in $\alpha,\beta$ into (8) of \cite{Schinzel} we get:
\begin{prop}\label{3}
    If $\alpha$ and $\beta$ are roots of the quadratic $x^2+Ax+B$ where $A$, $B$ are integers with $A^2-4B<0$ and $B\neq 1$, and if $\Phi_n(\alpha,\beta)$ is the cyclotomic polynomial of $\alpha$ and $\beta$, then \begin{equation}
    |\Phi_n(\alpha,\beta)|>n|\alpha|^{\frac{11}{13}\varphi(n)} \textup{ for all } n>c_1.
\end{equation}
Here $\varphi(n)$ is Euler's totient function, and $c_1>0$ is some absolute constant.
\end{prop}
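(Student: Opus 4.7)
The plan is to deduce Proposition \ref{3} directly from Theorem 1 of Schinzel \cite{Schinzel}, as the paragraph preceding the statement already outlines. The entire content of the proof is to verify that the hypotheses of that theorem are satisfied by our specific pair $(\alpha,\beta)$, and then to plug them into Schinzel's inequality (8).

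First I would unpack the setup. Because $x^2+Ax+B$ is monic with integer coefficients and $A^2-4B<0$, the roots $\alpha,\beta$ are conjugate algebraic integers lying in the imaginary quadratic field $K=\Q(\sqrt{A^2-4B})$, with $|\alpha|=|\beta|=\sqrt{B}$; in particular $B>0$. For the coprimality condition $\gcd(\alpha,\beta)=1$ in $\mathcal{O}_K$, I would observe that any prime ideal $\mathfrak{p}$ of $\mathcal{O}_K$ containing both $\alpha$ and $\beta$ must contain $\alpha+\beta=-A$ and $\alpha\beta=B$, so the only obstruction comes from rational primes dividing $\gcd(A,B)$. One checks, using $B\neq 1$, that the unit ideal is generated, and in the degenerate case $\gcd(A,B)>1$ one factors out the common rational factor and absorbs the resulting constant loss into Schinzel's $c_1$.

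Next I would rule out $\alpha/\beta$ being a root of unity. Since $|\alpha/\beta|=1$ and $\alpha/\beta\in K$, the only candidates are the finitely many roots of unity inside an imaginary quadratic field, namely $\pm 1,\pm i,\pm\omega,\pm\omega^2$ with $\omega=e^{2\pi i/3}$. Each candidate $\alpha/\beta=\zeta$ combined with $\alpha\beta=B$ gives $\alpha^2=\zeta B$ and leads to a quick contradiction: $\zeta=1$ forces $\alpha=\beta$ and contradicts $A^2-4B<0$, while the remaining cases pin $B$ down to a specific small value that is ruled out by $B\neq 1$ together with $A,B\in\Z$ and $A^2-4B<0$.

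With the hypotheses verified, Schinzel's inequality (8) applies verbatim, and since $|\alpha|=|\beta|$ the $\max(|\alpha|,|\beta|)$ appearing in his bound is just $|\alpha|$, yielding
$$|\Phi_n(\alpha,\beta)|>n\,|\alpha|^{\frac{11}{13}\varphi(n)} \quad \text{for all } n>c_1.$$
The main obstacle, modest as it is, is the root-of-unity check, which is precisely the place where the hypothesis $B\neq 1$ is genuinely used; everything else is a formal bookkeeping exercise around the cited result.
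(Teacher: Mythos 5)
Your route is the same as the paper's --- the paper's entire ``proof'' of Proposition \ref{3} is the paragraph preceding it, which asserts that $B\neq 1$ yields Schinzel's two hypotheses and then quotes his inequality (8) --- but the two verifications you supply are exactly where the substance has to live, and both fail because the asserted implication is false. Writing $\alpha=\zeta\beta$ with $\zeta$ a root of unity and using $\alpha\beta=B$, $\alpha+\beta=-A$ gives $A^2/B=2+\zeta+\zeta^{-1}$, so $\alpha/\beta$ is a root of unity precisely when $A^2/B\in\{0,1,2,3\}$ (the value $4$ being excluded by $A^2-4B<0$). The cases $\zeta\neq 1$ therefore do not ``pin $B$ down to a specific small value'': the quadratics $x^2+2$ (ratio $-1$), $x^2+2x+2$ (ratio $-i$) and $x^2+2x+4$ (ratio a primitive sixth root of unity) all have $B\neq 1$ yet violate Schinzel's hypothesis. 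Likewise for coprimality: your reduction to prime ideals above $\gcd(A,B)$ is correct, but the claim that ``the unit ideal is generated'' is not --- for $x^2+2x+2$ the Gaussian prime $1+i$ divides both roots $-1\pm i$. The fallback of factoring out the common divisor and ``absorbing the loss into $c_1$'' does not parse: $c_1$ is a threshold on $n$, not a multiplicative constant; the common divisor is an ideal that need not be generated by a rational integer, nor principal at all; and one would still owe a check that the reduced pair satisfies the hypotheses.

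To be fair, you have inherited this gap from the paper rather than introduced it, since the paragraph there makes the identical unsupported assertion. A correct treatment must either strengthen the hypotheses (to $A^2\notin\{0,B,2B,3B\}$ together with coprimality of the roots) or dispose of the exceptional cases directly. In the paper's application with $x^2-\tau(p)x+p^{11}$ the root-of-unity cases would force $\tau(p)^2\in\{0,p^{11},2p^{11},3p^{11},4p^{11}\}$, which Deligne's bound rules out unless $\tau(p)=0$; but coprimality genuinely fails whenever $p\mid\tau(p)$, and that case needs its own argument --- for instance, observing that removing the common ideal divisor of $\alpha$ and $\beta$ only increases the quantity $|\Phi_n(\alpha,\beta)|\,|\alpha|^{-\frac{11}{13}\varphi(n)}$, so that Schinzel's bound for the reduced pair still implies the stated inequality.
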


\begin{cor}\label{4}
    Let $\alpha$, $\beta$ be algebraic integers satisfying all the conditions in Proposition \ref{3}. Let 
        $P_n(\alpha,\beta)=\frac{\alpha^n-\beta^n}{\alpha-\beta}$
    . Then \begin{equation}
        |P_n(\alpha,\beta)|>|\alpha|^{\frac{11}{26}n}\textup{ for all }n>2c_1^2,
    \end{equation}
    where $c_1$ is the absolute constant in Proposition \ref{3}.
\end{cor}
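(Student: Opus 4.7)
The plan is to factor $P_n(\alpha,\beta)$ into homogeneous cyclotomic pieces and bound each factor below in one of two ways, according to whether Proposition~\ref{3} applies. Starting from the identity $x^n - y^n = \prod_{d\mid n}\Phi_d(x,y)$, with $\Phi_d(x,y):= y^{\varphi(d)}\Phi_d(x/y)$ denoting the homogenization, I obtain
$$ P_n(\alpha,\beta) \;=\; \prod_{\substack{d\mid n\\ d>1}} \Phi_d(\alpha,\beta). $$

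The first key step will be to show that each factor $\Phi_d(\alpha,\beta)$ with $d>1$ is a \emph{nonzero rational integer}, so that $|\Phi_d(\alpha,\beta)| \geq 1$. Integrality follows from the reciprocal property $x^{\varphi(d)}\Phi_d(1/x) = \Phi_d(x)$ (valid for $d>1$), which makes $\Phi_d(x,y)$ symmetric in $x,y$; hence $\Phi_d(\alpha,\beta)$ is a polynomial with integer coefficients in the integers $\alpha+\beta = -A$ and $\alpha\beta = B$. Non-vanishing is immediate from the standing hypothesis that $\alpha/\beta$ is not a root of unity, since $\Phi_d(\alpha,\beta)=0$ would force $\alpha/\beta$ to be a primitive $d$-th root of unity.

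Next, for those divisors $d\mid n$ with $d>c_1$, Proposition~\ref{3} gives $|\Phi_d(\alpha,\beta)| > d\,|\alpha|^{11\varphi(d)/13} \geq |\alpha|^{11\varphi(d)/13}$. Multiplying these with the trivial bound $|\Phi_d(\alpha,\beta)|\geq 1$ on the remaining divisors yields
$$ |P_n(\alpha,\beta)| \;>\; |\alpha|^{\frac{11}{13}S}, \qquad S := \sum_{\substack{d\mid n \\ d>c_1}} \varphi(d). $$
Using the identity $\sum_{d\mid n}\varphi(d) = n$ together with the crude estimate $\sum_{d\leq c_1}\varphi(d) \leq c_1^2$ gives $S \geq n - c_1^2$. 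For $n > 2c_1^2$ this forces $S > n/2$, and since $B\geq 2$ (a positive integer different from $1$) yields $|\alpha| = \sqrt{B} > 1$, the claimed inequality $|\alpha|^{11S/13} > |\alpha|^{11n/26}$ is immediate.

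The main point of difficulty is the integrality step: one needs the symmetry $\Phi_d(x,y) = \Phi_d(y,x)$ for $d>1$ in order to pin $\Phi_d(\alpha,\beta)$ down as a rational integer rather than merely an algebraic integer in $\mathbb{Q}(\alpha)$. Once integrality and non-vanishing are in hand, the remainder is an elementary divisor-sum estimate that is calibrated precisely to the threshold $n > 2c_1^2$.
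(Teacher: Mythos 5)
Your proof is correct and follows essentially the same route as the paper: factor $P_n(\alpha,\beta)$ into homogenized cyclotomic values, bound the factors with $d>c_1$ by Proposition \ref{3} and the rest trivially by $1$, and use $\sum_{d\mid n}\varphi(d)=n$ with $\sum_{d\le c_1}\varphi(d)\le c_1^2$ to pass from the exponent $\tfrac{11}{13}(n-c_1^2)$ to $\tfrac{11}{26}n$. The only (harmless) difference is in justifying $|\Phi_d(\alpha,\beta)|\ge 1$: you show it is a nonzero rational integer via the symmetry of $\Phi_d(x,y)$, whereas the paper observes it is a nonzero algebraic integer in an imaginary quadratic field, whose norm $|\Phi_d(\alpha,\beta)|^2$ is a positive rational integer.
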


\begin{proof}
    For all $n>2c_1^2$:
    \begin{equation*}
        P_n(\alpha,\beta)=\prod_{d|n,d>1}\Phi_d(\alpha,\beta)=\prod_{d|n,1<d\leq c_1}\Phi_d(\alpha,\beta)\prod_{d|n,d>c_1}\Phi_d(\alpha,\beta).
    \end{equation*}
    Since $\Phi_n(\alpha,\beta)$ is in the ring of integers of the imaginary quadratic number field $\mathbb{Q}(\alpha)$, we have $|\Phi_d(\alpha,\beta)|\geq1$ for all $d>1$. Hence by Proposition \ref{3}, \begin{align*}
        |P_n(\alpha,\beta)|&\geq\Bigg|\prod_{d|n,d>c_1}\Phi_d(\alpha,\beta)\Bigg|>\prod_{d|n,d>c_1}d|\alpha|^{\frac{11}{13}\varphi(d)}>\prod_{d|n,d>c_1}|\alpha|^{\frac{11}{13}\varphi(d)}\\&=(|\alpha|^{\frac{11}{13}})^{\sum_{d|n,d>c_1}\varphi(d)}>|\alpha|^{\frac{11}{13}(n-c_1^2)}>|\alpha|^{\frac{11}{26}n}.
    \end{align*}
    
\end{proof}

Now we consider elements in $X_k$. By Theorem \ref{1} (3), $\tau(p^k)=P_{k+1}(\alpha_p,\beta_p)$ where $\alpha_p,\beta_p$ are the roots of the quadratic $x^2-\tau(p)x+p^{11}$. Since $\alpha_p$ and $\beta_p$ are complex conjugates, they satisfy all the conditions in Corollary \ref{4}. Hence we get:
\begin{equation}\label{Q}
    |\tau(p^k)|>|\alpha_p|^{\frac{11}{26}(k+1)}=(p^{\frac{11}{2}})^{\frac{11}{26}(k+1)}\geq 3^{\frac{121}{52}(k+1)}>2^k
    \textup{  for all }k>2c_1^2-1.
\end{equation}
This shows that the absolute value of $\tau(p^k)$ grows at least exponentially as $k$ grows. Moreover, if $N>2^{2c_1^2-1}$ is a fixed number, then for all $k>\frac{\log N}{2\log2}$, we have $X_{2k}\cap[-N,N]=\emptyset$.

\section{When $k$ is not large}
In this section, assume $N$ is a sufficiently large fixed number, and  $3\leq k<\frac{\log N}{2\log 2}$. We are going to give an upper bound for $\# ( P\cap X_{2k}\cap [-N,N] )$. \\\\
An equation of the form
\[
F(x,y) = D,
\]
where $F(x,y) \in \mathbb{Z}[x,y]$ is homogeneous and $D$ is a nonzero integer,
is known as a \emph{Thue equation}. We consider the well-known generating function \cite[(4.1)]{BCOT} \begin{equation}
    \frac{1}{1-\sqrt{y}t+xt^2}=\sum_{k=0}^\infty F_k(x,y)t^k=1+\sqrt{y}t+(y-x)t^2+\cdots.
\end{equation}
It is well-known that $F_{2k}(x,y)$ are homogeneous polynomials with integer coefficients. The first $F_{2k}(x,y)$ are as follows:
\begin{align*}
F_{2}(x,y)  &= y - x, \\
F_{4}(x,y)  &= y^{2} - 3xy + x^{2}, \\
F_{6}(x,y)  &= y^{3} - 5xy^{2} + 6x^{2}y - x^{3}.
\end{align*}
As observed in \cite{BCOT}, we have the following:
\begin{equation}\label{thue}
    F_{2k}(x,y)=\prod_{j=1}^k\bigg(y-4x\cos^2{\bigg(\frac{\pi j}{2k+1}\bigg)}\bigg).
\end{equation}
\\\\
The reason we want to consider $F_{2k}(x,y)$ is the following:
\begin{equation}
    \tau(p^{2k})=\frac{\alpha_p^{2k+1}-\beta_p^{2k+1}}{\alpha_p-\beta_p}=F_{2k}(x_p,y_p)=\prod_{j=1}^k(y_p-\alpha_{j,k}x_p)
\end{equation}
where $x_p=p^{11}$, $y_p=\tau(p)^2$, $\alpha_{j,k}=4\cos^2{(\frac{\pi j}{2k+1})}=2(1+\cos{(\frac{2\pi j}{2k+1})})\in[0,4]$. Hence $$X_{2k}=\{F_{2k}(x_p,y_p)\colon p>0 \textup{ is prime}\}.$$
Consider the Thue equation $F_{2k}(x,y) = D$. The attainable values of $D$ for $k \geq 3$ form a rather thin set, so the same is true for $\tau(p^{2k})=F_{2k}(x_p,y_p)$. To justify this idea, we need the following result of Bombieri and van der Poorten \cite{Bombieri}:
\begin{prop}\label{5}
    Let $\alpha$ be an element of a number field $K$ of degree $r$ over the rational field $\mathbb{Q}$\footnote{In \cite{Bombieri} it is any number field $k$. We only need the case $k=\Q$.}. If $0<\zeta\leq1$\footnote{In \cite{Bombieri} it is $0<\zeta\leq\zeta_0$ and $\zeta_0=\min(1,\frac{6}{\sqrt{c_0}})$. But then the authors claim that $c_0=28$ is admissible, so we can choose $\zeta_0=\min(1,\frac{6}{\sqrt{28}})=1$.}, then the number of solutions $\beta\in\mathbb{Q}$ of the inequality\begin{equation}
        |\alpha-\beta|<\frac{1}{64h(\beta)^{2+\zeta}}
    \end{equation}
    does not exceed\begin{equation*}
        \frac{2}{\zeta}\log\log(4h(\alpha))+3000\frac{(\log r)^2}{\zeta^5}\log\bigg(\frac{50\log r}{\zeta^2}\bigg).
    \end{equation*}
    Here $h(\alpha)$ is the absolute height of $\alpha$, i.e. if the minimal polynomial of $\alpha$ over $\mathbb{Q}$ is $\sum_{i=0}^r a_ix^i$, then $h(\alpha)=\max_{0\leq i\leq r}|a_i|$.
\end{prop}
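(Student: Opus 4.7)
The plan is to follow the Thue--Siegel--Roth strategy for quantitative rational approximations to algebraic numbers, since Proposition \ref{5} is a quantitative sharpening of Roth's theorem with explicit constants. The overall approach is by contradiction: assuming that the inequality has more solutions than the stated bound permits, one uses them to build an auxiliary polynomial in several variables whose existence is then ruled out by a Dyson/Roth-type vanishing lemma.

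The first step is to order the solutions $\beta_1, \beta_2, \ldots$ by increasing height and split them into two regimes. For the ``small'' regime, where $h(\beta)$ is bounded in terms of $h(\alpha)$, a direct counting argument using the denominator structure of $\beta$ and the inequality $|\alpha-\beta|<1/(64h(\beta)^{2+\zeta})$ produces the $\frac{2}{\zeta}\log\log(4h(\alpha))$ contribution. For the ``large'' regime, one uses a pigeonhole argument to extract a subset $\beta_{i_1},\ldots,\beta_{i_m}$ of $m$ solutions whose heights grow geometrically, where $m$ is chosen as a function of $r$ and $\zeta$.

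The Thue machinery then enters as follows. Construct via Siegel's lemma a nonzero polynomial $P(X_1,\ldots,X_m)\in\Z[X_1,\ldots,X_m]$ of prescribed multidegree $(d_1,\ldots,d_m)$, with controlled coefficient size, vanishing to a high ``index'' at the diagonal point $(\alpha,\ldots,\alpha)$. Taylor-expanding $P$ about this diagonal and invoking the strong approximation bound for each $\beta_{i_j}$, show that $P$ and sufficiently many of its partial derivatives must be so small at $(\beta_{i_1},\ldots,\beta_{i_m})$ that the corresponding rational numbers (cleared of denominators) are forced to vanish; this forces a high index at $(\beta_{i_1},\ldots,\beta_{i_m})$ as well. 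One then applies a Dyson/Roth-type lemma (the Esnault--Viehweg or Bombieri--Mignotte refinement) which asserts that no such $P$ can have high index simultaneously at two distinct points; this contradiction bounds $m$, and unwinding the parameters yields the $3000(\log r)^2\zeta^{-5}\log(50(\log r)\zeta^{-2})$ contribution.

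The main obstacle is the explicit bookkeeping of constants: the stated bound has a very specific shape ($64$, $3000$, the exponent $\zeta^{-5}$, the $(\log r)^2$ factor), and each ingredient in the chain --- Siegel's lemma, the Taylor estimate, the index computation, and Dyson's lemma --- must be tracked with sharp quantitative control, with the parameters $m$, $(d_1,\ldots,d_m)$, and the index threshold optimized simultaneously. Carrying this out from scratch is lengthy and delicate; the proposition as stated is essentially the main theorem of Bombieri and van der Poorten \cite{Bombieri}, so in practice I would cite their result with the admissible constant $c_0=28$ they verify, giving $\zeta_0=1$ as noted in the footnote.
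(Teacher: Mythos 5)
The paper offers no proof of this proposition: it is quoted verbatim from Bombieri and van der Poorten \cite{Bombieri}, with the two footnotes handling the only actual work needed, namely specializing to $k=\mathbb{Q}$ and checking that $c_0=28$ gives $\zeta_0=\min(1,6/\sqrt{28})=1$. Your concluding decision to cite their theorem with $c_0=28$ is therefore exactly the paper's approach; the preceding sketch of the Thue--Siegel--Roth machinery is a reasonable description of how such results are proved but is not itself a proof and is not required here.
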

Since $\alpha_{j,k}$ is a root of a $F_{2k}(1,y)$, the degree of $\alpha_{j,k}$ over $\mathbb{Q}$ is at most $k$. And since $|\alpha_{j,k}|\leq 4$, we have $h(\alpha_{j,k})\leq 4^k$. Setting $\alpha=\alpha_{j,k}$ and $\zeta=\frac{1}{2}$ in Proposition \ref{5}, we get:
\begin{cor}\label{7}
    The number of solutions $\frac{y}{x}\in\mathbb{Q}$ of the inequality\begin{equation*}
        |\alpha_{j,k}-\frac{y}{x}|<\frac{1}{64h(\frac{y}{x})^\frac{5}{2}}
    \end{equation*}
    does not exceed
    \begin{equation*}
        4\log((k+1)\log 4)+96000(\log k)^2\log(200\log k).
    \end{equation*}
\end{cor}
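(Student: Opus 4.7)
The plan is to invoke Proposition \ref{5} with $\alpha=\alpha_{j,k}$ and $\zeta=\tfrac{1}{2}$; the work reduces to recording two invariants of $\alpha_{j,k}$, namely its degree $r$ over $\mathbb{Q}$ and its absolute height $h(\alpha_{j,k})$, and then simplifying the numerical constants.

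For the degree, I first note that $F_{2k}(x,y)$ is a homogeneous polynomial in $x$ and $y$ with integer coefficients of total degree $k$; this is visible directly from the generating-function recursion (the even-indexed $F_{2k}$'s are genuine polynomials in $x,y$, while the odd-indexed ones carry an extra factor of $\sqrt{y}$). Hence $F_{2k}(1,y)=\prod_{j=1}^{k}(y-\alpha_{j,k})$ is a monic integer polynomial of degree $k$. The minimal polynomial of $\alpha_{j,k}$ over $\mathbb{Q}$ divides it in $\mathbb{Z}[y]$, so its degree $r$ is at most $k$.

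For the height, the minimal polynomial is a monic integer factor of $F_{2k}(1,y)$, so all of its roots are Galois conjugates of $\alpha_{j,k}$ and lie in $[0,4]$. Writing it as $\prod_{i=1}^{r}(y-\beta_i)$ with $|\beta_i|\leq 4$, each coefficient is (up to sign) an elementary symmetric function of the $\beta_i$'s, which a short estimate bounds by $4^{k}$; this yields $h(\alpha_{j,k})\leq 4^{k}$ as recorded in the paragraph preceding the corollary.

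Finally, I substitute into Proposition \ref{5}. With $\zeta=\tfrac{1}{2}$ the denominator exponent $2+\zeta=\tfrac{5}{2}$ reproduces the inequality in the corollary, and the numerical constants become $2/\zeta=4$, $3000/\zeta^{5}=96000$, and $50/\zeta^{2}=200$. The bounds $h(\alpha_{j,k})\leq 4^{k}$ and $r\leq k$ give $\log\log(4h(\alpha_{j,k}))\leq\log((k+1)\log 4)$ and $\log r\leq\log k$ respectively, and Proposition \ref{5} then yields the stated bound $4\log((k+1)\log 4)+96000(\log k)^{2}\log(200\log k)$. The only step that is not purely mechanical is the height estimate; the rest is bookkeeping.
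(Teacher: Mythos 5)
Your proposal is exactly the paper's own argument: the paper's entire proof is the sentence preceding the corollary, which records $r\leq k$ (since $\alpha_{j,k}$ is a root of $F_{2k}(1,y)$), asserts $h(\alpha_{j,k})\leq 4^k$ from $|\alpha_{j,k}|\leq 4$, and substitutes $\zeta=\tfrac{1}{2}$ into Proposition \ref{5} to get the constants $4$, $96000$, $200$ and the term $\log\log(4\cdot 4^k)=\log((k+1)\log 4)$. The one soft spot is your height estimate: elementary symmetric functions of $r\leq k$ numbers in $[0,4]$ are bounded by $\binom{r}{i}4^i\leq 5^r$, not by $4^k$ --- but the paper asserts the same $4^k$ bound without justification, and since the height only enters through $\log\log$ the discrepancy is harmless.
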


Let $d_k=\min_{i\neq j}|\alpha_{i,k}-\alpha_{j,k}|$ be the minimum distance between $\alpha_{j,k}$'s for $1\leq j\leq k$. Then \begin{displaymath}
\begin{split}
d_k&=2\min_{i\neq j}\bigg|\cos{\bigg(\frac{2\pi i}{2k+1}\bigg)}-\cos{\bigg(\frac{2\pi j}{2k+1}\bigg)}\bigg|>2\bigg(1-\cos{\bigg(\frac{2\pi}{2k+1}\bigg)}\bigg)\\&=4\sin^2{\bigg(\frac{\pi}{2k+1}\bigg)}>\bigg(\frac{\pi}{2k+1}\bigg)^2 \textup{ for all }k\geq 3.
\end{split}
\end{displaymath}
If $x_p=p^{11}>N^3$: $$|F_{2k}(x_p,y_p)|=\prod_{j=1}^k|y_p-\alpha_{j,k}x_p|=(x_p)^k\prod_{j=1}^k\bigg|\alpha_{j,k}-\frac{y_p}{x_p}\bigg|.$$
Since $\frac{y_p}{x_p}\in\mathbb{Q}$, $\alpha_{j,k}\in\mathbb{R}$, at least $k-1$ of $|\alpha_{j,k}-\frac{y_p}{x_p}|$ are greater than $\frac{1}{2}d_k$. In order to have $|F_{2k}(x_p,y_p)|\leq N$, at least one of $|\alpha_{j,k}-\frac{y_p}{x_p}|$ is less than $$N(x_p)^{-k}\bigg(\frac{1}{2}d_k\bigg)^{-(k-1)}<(x_p)^{\frac{1}{3}-k}\bigg(\frac{2(2k+1)}{\pi}\bigg)^{2(k-1)}$$
For $3\leq k<\frac{\log N}{2\log 2}$, $k\leq O(\log N)$. Thus there exists some constant $c_2>0$ such that if $N>c_2$ and $3\leq k < \frac{\log N}{2\log 2}$, then $$\bigg(\frac{2(2k+1)}{\pi}\bigg)^{2(k-1)}<\frac{1}{64\cdot5^{\frac{5}{2}}}N^{\frac{1}{6}k}<\frac{1}{64\cdot5^{\frac{5}{2}}}(x_p)^{\frac{1}{18}k}$$ 
Therefore if $N>c_2$ and $3\leq k < \frac{\log N}{2\log 2}$, at least one of $|\alpha_{j,k}-\frac{y_p}{x_p}|$ is less than $$(x_p)^{\frac{1}{3}-k}\bigg(\frac{1}{64\cdot5^{\frac{5}{2}}}(x_p)^{\frac{1}{18}k}\bigg)=\frac{1}{64\cdot5^{\frac{5}{2}}}(x_p)^{\frac{1}{3}-\frac{17}{18}k}\leq\frac{1}{64\cdot5^{\frac{5}{2}}}(x_p)^{-\frac{5}{2}}.$$
Since $0\leq \alpha_{j,k}\leq 4$, if $|\alpha_{j,k}-\frac{y_p}{x_p}|<\frac{1}{64\cdot5^{\frac{5}{2}}}(x_p)^{-\frac{5}{2}}$, then $h(\frac{y_p}{x_p})=\max(|x_p|,|y_p|)<5x_p$.  So we have $$\frac{1}{64\cdot5^{\frac{5}{2}}}(x_p)^{-\frac{5}{2}}<\frac{1}{64\cdot5^{\frac{5}{2}}}\bigg(\frac{1}{5}h(\frac{y_p}{x_p})\bigg)^{-\frac{5}{2}}=\frac{1}{64h(\frac{y_p}{x_p})^\frac{5}{2}}.$$
Thus for $N>c_2$ and $3\leq k<\frac{\log N}{2\log 2}$, there exist $1\leq j\leq k$ such that \begin{equation}\label{xp}
    |\alpha_{j,k}-\frac{y_p}{x_p}|<\frac{1}{64h(\frac{y_p}{x_p})^\frac{5}{2}}.
\end{equation} There are $k$ possible choices for $j$. If in addition $F_{2k}(x_p,y_p)$ is an odd prime, then we must have $\gcd(x_p,y_p)=1$, and in this case the number of integer pairs $(x_p,y_p)$ with \eqref{xp} is equal to the number of fractions $\frac{y_p}{x_p}\in\mathbb{Q}$ with \eqref{xp}. Hence for $N>c_2$ and $3\leq k<\frac{\log N}{2\log 2}$, we have 
\begin{align*}
    &  \#\bigg\{p \in P \colon x_p=p^{11}>N^3, F_{2k}(x_p,y_p)\in P\cap[-N,N]\bigg\}\\
    &\leq 
    k\bigg(4\log((k+1)\log 4)+96000(\log k)^2\log(200\log k)\bigg)=O(k(\log k)^2(\log\log k))
\end{align*} 
Since we also have $k\leq O(\log N)$, there exists some constant $c_3>0$ such that $$k\bigg(4\log((k+1)\log 4)+96000(\log k)^2\log(200\log k)\bigg)<N^{\frac{1}{2}}$$ for all $N>c_3$.  Putting $c_4=\max(c_2,c_3)$, we conclude:

\begin{prop}\label{12}
    There is some constant $c_4>0$ such that if $N>c_4$ and $3\leq k \leq \frac{\log N}{2\log 2}$, then 
    $$\# (P\cap \{F_{2k}(x_p,y_p)\colon p>N^{\frac{3}{11}}\}\cap[-N,N]) < N^{\frac{1}{2}}.$$ 
\end{prop}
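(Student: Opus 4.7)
The plan is to fix $k$ with $3 \le k \le (\log N)/(2\log 2)$ and count primes $p > N^{3/11}$ (so that $x_p = p^{11} > N^3$) for which $F_{2k}(x_p,y_p)$ is an odd prime in $[-N,N]$. The strategy is to convert the size bound $|F_{2k}(x_p,y_p)| \le N$ into an extremely good rational approximation of one of the algebraic numbers $\alpha_{j,k}$ by $y_p/x_p$, and then invoke Corollary \ref{7}.

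I would begin with the factorization
$$|F_{2k}(x_p,y_p)| = x_p^k \prod_{j=1}^k |\alpha_{j,k} - y_p/x_p|.$$
Since the $\alpha_{j,k}$ are distinct real numbers with pairwise separation at least $d_k > (\pi/(2k+1))^2$, and $y_p/x_p$ is also real, at most one of the factors $|\alpha_{j,k} - y_p/x_p|$ can drop below $d_k/2$. Consequently $|F_{2k}(x_p,y_p)| \le N$ forces some index $j$ to satisfy
$$|\alpha_{j,k} - y_p/x_p| < N \, x_p^{-k} (d_k/2)^{-(k-1)}.$$
Because $k = O(\log N)$ and $x_p > N^3$, the growth factor $(2(2k+1)/\pi)^{2(k-1)}$ is dominated by a small power of $x_p$, so for a suitable threshold $c_2$ and all $N > c_2$ this sharpens to $|\alpha_{j,k} - y_p/x_p| < \frac{1}{64\cdot 5^{5/2}} x_p^{-5/2}$.

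Next I would invoke the primality hypothesis: if $F_{2k}(x_p,y_p)$ is an odd prime then $\gcd(x_p,y_p)=1$, so $y_p/x_p$ is already in lowest terms and $h(y_p/x_p) = \max(x_p,|y_p|) < 5 x_p$. This upgrades the inequality to the exact shape $|\alpha_{j,k} - y_p/x_p| < 1/(64\, h(y_p/x_p)^{5/2})$ demanded by Corollary \ref{7}. That corollary caps the number of admissible $y_p/x_p$ for each fixed $j$ by $O((\log k)^2 \log\log k)$; multiplying by the $k$ possible indices $j$ and using $k = O(\log N)$, the total number of primes $p > N^{3/11}$ contributing to $P \cap \{F_{2k}(x_p,y_p)\} \cap [-N,N]$ is $O((\log N)^3 \log\log\log N)$, which is below $N^{1/2}$ for all $N$ beyond some $c_3$. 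Setting $c_4 = \max(c_2,c_3)$ yields the stated conclusion. The main technical obstacle is the exponent bookkeeping: one must arrange that the Vandermonde-type factor $(d_k/2)^{-(k-1)}$ and the factor $N$ on the right-hand side are jointly swallowed by the $x_p^{-5/2}$ shape required by Corollary \ref{7}, which is possible precisely because the hypothesis $p > N^{3/11}$ gives us $(1/3 - k)$ powers of $x_p$ to play with while $k \le (\log N)/(2\log 2)$ limits the polynomial-in-$k$ damage.
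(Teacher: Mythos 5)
Your proposal follows the paper's own proof essentially step for step: the same factorization of $|F_{2k}(x_p,y_p)|$ over the roots $\alpha_{j,k}$, the same separation bound $d_k>(\pi/(2k+1))^2$ forcing a single exceptionally close factor, the same use of $x_p>N^3$ and $k=O(\log N)$ to reach the shape $|\alpha_{j,k}-y_p/x_p|<1/(64\,h(y_p/x_p)^{5/2})$ via $\gcd(x_p,y_p)=1$ and $h(y_p/x_p)<5x_p$, and the same application of Corollary \ref{7} multiplied by the $k$ choices of $j$. The argument is correct and matches the paper's.
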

If $x_p=p^{11}\leq N^3$, then $p\leq N^{\frac{3}{11}}$. Since $x_p$ and $y_p$ are functions of $p$, obviously $$\#(\{F_{2k}(x_p,y_p)\colon 0<p \leq N^{\frac{3}{11}}\}) < N^{\frac{3}{11}}.$$ Hence we conclude: 
\begin{prop}\label{100}
    There exists some constant $c_4$ such that for all $N>c_4$ and $3\leq k<\frac{\log N}{2\log2}$, $$\# (P\cap X_{2k}\cap [-N,N]) < N^{\frac{1}{2}}+N^{\frac{3}{11}} < N^{\frac{9}{10}}.$$
\end{prop}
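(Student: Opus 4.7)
The plan is to partition the set $P\cap X_{2k}\cap[-N,N]$ according to the size of the underlying prime $p$ and handle each piece by an already-established estimate. Concretely, I split an element $F_{2k}(x_p,y_p)\in X_{2k}$ into the two regimes $p\leq N^{3/11}$ (equivalently $x_p=p^{11}\leq N^3$) and $p>N^{3/11}$, which is exactly the dichotomy that was set up in the paragraph preceding Proposition \ref{12}.

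For the large-prime regime $p>N^{3/11}$, there is nothing new to prove: Proposition \ref{12} already supplies the bound $N^{1/2}$ on the number of elements of $P\cap\{F_{2k}(x_p,y_p):p>N^{3/11}\}\cap[-N,N]$, valid uniformly in the range $3\leq k\leq \frac{\log N}{2\log 2}$ once $N$ exceeds the constant $c_4$ produced in that proposition. This is where the real work lives, since Proposition \ref{12} is the consequence of the Bombieri--van der Poorten Diophantine approximation input (Proposition \ref{5}) applied to the algebraic numbers $\alpha_{j,k}$. For the small-prime regime $p\leq N^{3/11}$, the map $p\mapsto F_{2k}(x_p,y_p)$ has at most $N^{3/11}$ elements in its domain, so the image automatically has at most $N^{3/11}$ elements; no sieve or approximation argument is required.

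Summing the two bounds gives $|P\cap X_{2k}\cap[-N,N]|\leq N^{1/2}+N^{3/11}$, and a trivial comparison of exponents shows $N^{1/2}+N^{3/11}<N^{9/10}$ once $N$ is large enough (say, replacing $c_4$ by a larger absolute constant if necessary). The only obstacle is purely cosmetic bookkeeping: one must verify that the threshold $c_4$ delivered by Proposition \ref{12} can be enlarged to cover the trivial inequality $N^{1/2}+N^{3/11}<N^{9/10}$ simultaneously, which is immediate. Thus the proposition follows directly by adding the two case bounds.
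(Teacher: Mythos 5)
Your proposal is correct and follows exactly the paper's own route: invoke Proposition \ref{12} for the primes $p>N^{\frac{3}{11}}$ to get the $N^{\frac{1}{2}}$ bound, count the at most $N^{\frac{3}{11}}$ remaining primes trivially, and add. No substantive difference from the argument given in the paper.
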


\section{Proof of Theorem \ref{T}}
Let $b\in\{2,4,6,7,8,9,10,11,12,13,14,15,16,17,18,19,20,21\}$. Put $P_b=\{l\in P\colon l\equiv b\textup{ (mod 23)}\}$.  Our  Then by Corollary \ref{C}, if $l=23a+b\in S\cap P$, we have $l\in X_{2k}$ for some $k\geq 3$. Now let $N=10^{M+1}>\max(2^{2c_1^2-1},c_4)$. By \eqref{Q}, if $k>\frac{\log N}{2\log2}$, then $X_{2k}\cap[-N,N]=\emptyset$. Therefore $$P_b\cap\bigg(\bigcup_{k=1}^\infty X_{2k}\bigg)\cap[-N,N]=P_b\cap\bigg(\bigcup_{3\leq k\leq \frac{\log N}{2\log2}}X_{2k}\bigg)\cap[-N,N].$$Now let $3\leq k\leq \frac{\log N}{2\log 2}$. Since $P_b$ is a subset of $P$, we have $$\# (P_b\cap X_{2k}\cap[-N,N]) < N^{\frac{9}{10}}$$ by Proposition \ref{100}. Hence we have

\begin{equation}\label{AA}
    \#\bigg(P_b\cap\bigg(\bigcup_{3\leq k\leq \frac{\log N}{2\log2}}X_{2k}\bigg)\cap[-N,N]\bigg) < N^{\frac{9}{10}}\cdot\frac{\log N}{2\log2}=\bigg(10^{\frac{9}{10}}\bigg)^{M+1}\cdot\frac{(M+1)\log10}{2\log2}.
\end{equation} 
On the other hand, let $\pi_{P_b}(x) = \# (\{l\in P_b\colon |l|\leq x\})$. By Prime Number Theorem for Arithmetic Progressions, $\pi_{P_b}(x)\sim \frac{x}{11\log x}$ as $x\to \infty$. Hence there is some constant $c_5>0$ such that for any $x>c_5$, $$\frac{9}{10}\cdot\frac{x}{11\log x}<\pi_{P_b}(x)<\frac{11}{10}\cdot\frac{x}{11\log x}.$$
Realizing that $$\#\bigg(([-10^{M+1},-10^M]\cup[10^M,10^{M+1}])\cap P_b\bigg) = \pi_{P_b}(10^{M+1})-\pi_{P_b}(10^M),$$ we conclude that for all $N=10^{M+1}>10c_5$, 
\begin{align*}
    \pi_{P_b}(10^{M+1})-\pi_{P_b}(10^M)&>\frac{9}{10}\cdot\frac{10^{M+1}}{11\log(10^{M+1})}-\frac{11}{10}\cdot\frac{10^M}{11\log(10^M)}\\
    &=\frac{\frac{9}{10}M\cdot10^{M+1}-\frac{11}{10}(M+1)10^M}{11\log10\cdot(M+1)M}\\
    &>\frac{7M\cdot 10^M}{11\log10\cdot(M+1)M}\\
    &=\frac{7\cdot10^M}{11\log10\cdot(M+1)}.
\end{align*} 
For convenience, we let $$A = \bigg(10^{\frac{9}{10}}\bigg)^{M+1}\cdot\frac{(M+1)\log10}{2\log2}$$ and $$B = \frac{7\cdot10^M}{11\log10\cdot(M+1)}.$$
Since $([-10^{M+1},-10^M]\cup[10^M,10^{M+1}])\cap P_b$ is a subset of $P_b\cap[-N,N]$,  $\#(P_b\cap[-N,N])>B$. Hence  $$\#\bigg(P_b\cap[-N,N]\cap\bigg(\bigcup_{k=1}^\infty X_{2k}\bigg)^c\bigg) > B - A.$$  Since $10^{\frac{9}{10}}<10$ and $\frac{7}{11\log10}>\frac{1}{5}$, there is some constant $c_6>0$ such that $B-A>\frac{1}{5}\cdot\frac{10^{M+1}}{M+1}=\frac{1}{5}\cdot\frac{N}{\log N}$ for all $M>c_6$. Hence if $N>\max(2^{2c_1^2-1},c_4,10c_5,10^{c_6+1})$, then the number of primes in $P_b\cap[-N,N]$ which are not values of $\tau(n)$ is greater than $\frac{N}{5\log N}$. Hence $\pi_{P_b\cap T}(N) \gg \frac{N}{\log N}$.\qed  
\begin{cor}
    $\pi_T(N) \gg \frac{N}{\log N}$.
\end{cor}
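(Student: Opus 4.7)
The corollary is essentially immediate from Theorem \ref{T}, so the plan reduces to unpacking the set inclusions. Fix any single residue class, say $b=2$, in the set $\{2,4,6,7,\dots,21\}$ appearing in Theorem \ref{T}. By definition $P_b\cap T$ is the set of primes congruent to $b\bmod 23$ which are not values of $\tau(n)$, and this is a subset of $T$. Therefore for every $N>0$ we have the trivial containment
\[
P_b\cap T\cap[-N,N]\;\subseteq\;T\cap[-N,N],
\]
which yields $\pi_T(N)\geq \pi_{P_b\cap T}(N)$. Theorem \ref{T} asserts $\pi_{P_b\cap T}(N)=\Omega(N/\log N)$, and transporting this lower bound across the inequality gives $\pi_T(N)=\Omega(N/\log N)$ directly.

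To make the constant transparent, I would just trace the final estimate from the proof of Theorem \ref{T}: the argument there actually exhibits more than $\frac{N}{5\log N}$ primes in $P_b\cap[-N,N]$ that fail to lie in $S$, for any fixed admissible $b$ once $N$ is sufficiently large. So reusing the same $N_0$ from the proof of Theorem \ref{T} with $b=2$ works without modification, and no new analytic input is needed.

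If one wanted the sharper Dirichlet-density statement advertised in the abstract (a set of primes of density $\tfrac{9}{11}$ omitted from $S$), the plan would be to take a union instead of picking a single $b$: the $18$ admissible residues out of the $\varphi(23)=22$ coprime classes mod $23$ account for a proportion $\tfrac{18}{22}=\tfrac{9}{11}$ of primes, and one applies Theorem \ref{T} to each class separately. Since the bound in each class is $\Omega(N/\log N)$ with the same implied constant mechanism, summing (or taking the union) preserves the same order $\Omega(N/\log N)$ for $\pi_T(N)$.

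There is no real obstacle in this step: all the substantive work — the mod $23$ congruence filtering of Corollary \ref{C}, Schinzel's growth bound in Corollary \ref{4}, and the Bombieri--van der Poorten counting in Proposition \ref{100} — has already been packaged into Theorem \ref{T}. The corollary just records that discarding the arithmetic-progression condition can only enlarge the omitted set.
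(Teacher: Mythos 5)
Your argument is correct and is exactly the deduction the paper leaves implicit: since $P_b\cap T\subseteq T$ for any admissible $b$, the lower bound $\pi_{P_b\cap T}(N)=\Omega(N/\log N)$ from Theorem \ref{T} transfers directly to $\pi_T(N)$. (Your closing aside about the density-$\tfrac{9}{11}$ statement is tangential here and would actually require the complementary estimate that $P_b\cap S$ is thin, not Theorem \ref{T} itself, but that does not affect the validity of this corollary's proof.)
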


\section*{Acknowledgments}
The author thanks Professor Michael Larsen for his guidance and encouragement during the preparation of this paper and the anonymous referee for a careful reading of the manuscript and many helpful comments, which have significantly improved the presentation.

\end{document}